\newtheorem{theorem}{Theorem}[section]
\newtheorem{lemma}[theorem]{Lemma}
\newtheorem{definition}[theorem]{Definition}
\theoremstyle{remark}
\newtheorem{remark}{Remark}
\newtheorem{example}{Example}
\newcommand{\R}{\mathbb{R}}
\newcommand{\N}{\mathcal{N}}
\newcommand{\E}{\mathbb{E}} 
\date{}
\begin{document}

\title{Orthogonal polynomial duality of a two--species asymmetric exclusion process}
\author{Danyil Blyschak, Olivia Burke, Jeffrey Kuan, Dennis Li, Sasha Ustilovsky, Zhengye Zhou}
\maketitle 
\abstract{We examine type D ASEP, a two--species interacting particle system which generalizes the usual asymmetric simple exclusion process. For certain cases of type D ASEP, the process does not give priority for one species over another, even though there is nontrivial interaction between the two species. For those specific cases, we prove that the type D ASEP is self--dual with respect to an independent product of $q$--Krawtchouk polynomials. 
The type D ASEP was originally constructed in \cite{KLLPZ}, using the type D quantum groups $\mathcal{U}_q(\mathfrak{so}_6)$ and  $\mathcal{U}_q(\mathfrak{so}_8)$. That paper claimed that certain states needed to be ``discarded'' in order to ensure non--negativity. Here, we also provide a more efficient argument for the same claim.

Keywords: exclusion, quantum groups, Markov duality, orthogonal polynomials}

\section{Introduction}
The asymmetric simple exclusion process ASEP was introduced by Spitzer in 1970 \cite{Spit70}.  The ASEP can be generalized to multiple species \cite{Ligg76}. This paper considers the \textit{type D ASEP}, introduced in \cite{KLLPZ}. The state space of type $D$ ASEP consists of interacting particles of two species on a one--dimensional lattice, where at most one particle of each species may occupy a single site, and two particles may occupy a site only if they are different species. Thus, if the lattice has $L$ sites then there are $4^L$ possible configurations. Most generally, the type $D$ ASEP has three parameters $(q, n,\delta)$, where $q\in(0,1)$, $n\in \mathbb{N}$ and $\delta\in \mathbb{R}$. Roughly speaking $q$ is the asymmetry parameter, $n$ characterizes the speed of the drift, and $\delta$ quantifies the interaction between the two species of particles. When $\delta=0$ and  $n=2,3$, it was proved that the type $D$ ASEP has blocking measures and Markov self--duality which are independent copies of the single--species blocking measures and Markov self--duality. 

The main result concerns Markov duality (see also \cite{Sch97,BS,BS2,BS3,CGRS,CGRS2,KuanJPhysA,KIMRN} for type $D$  AESP with parameter $(q,2,0)$ and $ (q,3,0)$. The previous duality function of \cite{KLLPZ} was a ``triangular'' duality generalizing Schutz's duality function \cite{Sch97}.  In this paper, we produce an ``orthogonal polynomial'' duality function. Recent work on orthogonal polynomial duality functions was done in \cite{Franceschini2018SelfDualityOM, Carinci2019OrthogonalDO, GroneDual, CFG,FKZ}. More specifically, the paper \cite{CFG} proves that the $q$--Krawtchouk polynomials are duality functions for the single--species ASEP. In this paper, we will prove that the type $D$  ASEP with parameters $n=2,3$ and $\delta=0$ are orthogonal with respect to an independent product of $q$--Krawtchouk polynomials. Because these polynomials have an additional parameter $\alpha$ that is dependent on the reversible measures, they are more suitable for asymptotics than the triangular duality functions.

The Markov duality will be proved with two different methods. The first is a direct probabilistic argument, using induction on the number of lattice sites. The second is a more algebraic method, using the $*$--bialgebra structure of $\mathcal{U}_q(\mathfrak{so}_6)$. The connection between $\mathfrak{so}_6$ and type $D$ ASEP was explored in  \cite{KLLPZ}; in fact, $\mathfrak{so}_6$ is the type $D$  Lie algebra, providing the namesake for the type $D$ ASEP. In that construction, two of the six potential particle configurations were ``discarded'' in order to ensure non--negativity of the jump rates. That paper uses a computer--aided construction of a Casimir element. Here, we calculate the ``reversible measures'' that would appear if all six potential particle configurations were used. While these measures have a factorized form, they do not appear to be an independent product for any values of $(n,\delta)$.

\textbf{''CONFLICT OF INTEREST'', ''FUNDING'' AND ''DATA AVAILABILITY'' STATEMENTS} The authors would like to acknowledge support from  NSF grant DMS-2150094, as well as the Texas A\&M University Department of Mathematics and College of Science. There is no data associated to this paper and there are no conflicts of interest.



\section{Preliminary Definitions}

\subsection{Definition of the type $D$ ASEP}

The continuous-time Markov process of interest is named the type D Asymmetric Simple Exclusion Process (ASEP) with parameters $(q, n, \delta)$. There are two species (or classes) of particles, which we will call "first-class" and "second-class" and will label with $1$ and $2$ accordingly. Particle interactions take place on a one-dimensional lattice of $L$ sites $\Lambda_L = \{1, \hdots, L\}, L \in \mathbb{N}$. We denote the state space of the type D ASEP on $L$ lattice sites by $\Omega_L$. For $\eta \in \Omega_L,$ and $x \in \Lambda_L$,  let $\eta^x=(\eta^x_1,\eta^x_2)$ denote the configuration at site $x$ and  $\eta^x_1$ and $\eta^x_2$ count the number of first class and second class particles at site $x$, respectively. We denote with $\eta_1=(\eta_1^1,\ldots,\eta_1^L)$ the ``filtered'' configuration obtained by removing all second class particles from $\eta$ and $\eta_2=(\eta_2^1,\ldots,\eta_2^L)$ the ``filtered'' configuration obtained by removing all first class particles from $\eta.$

For the dynamics, we will assume closed/reflecting boundary conditions in the case of finite $L$, in that a particle that wishes to jump outside any outer lattice site is blocked from doing so. Likewise, a particle that wishes to jump to a lattice site that is already occupied by a particle of its same class is blocked from doing so. An explicit, but lengthy, description of the model can be found in \cite{KLLPZ}. In the present paper, however, only formulas are needed. 

 First, the generator for a two--site model (i.e. $L=2$) will be given below. We index the rows and columns by ordering the possible configurations lexicographically, where $0$ denotes an empty site, $1$ denotes a class 1 particle, $2$ denotes a class 2 particle, and $3$ denotes both a class 1 and class 2 particle: 
$(0, 0),
 (0, 1),
 (0, 2),
 (0, 3),
 (1, 0),
 (1, 1),
 (1, 2),
 (1, 3),
 (2, 0),
 (2, 1),
 (2, 2),
 (2, 3),
 (3, 0),
 (3, 1),
 (3, 2),
 (3, 3).$
 The generator is then a $16 \times 16$ matrix explicitly \footnote{The matrices were generated with Python code, and is available from the third or fourth authors upon request. The Python code also verified the duality result for 3 lattice sites, corresponding to $64\times 64$ matrices.} given by (for $\delta=0)$

 \arraycolsep=.5pt\def\arraystretch{1.5}
\resizebox{\linewidth}{!}{
$
\displaystyle \left[\begin{array}{cccccccccccccccc} * & 0 & 0 & 0 & 0 & 0 & 0 & 0 & 0 & 0 & 0 & 0 & 0 & 0 & 0 & 0\\0 & * &0 & 0 & q \left(q^{1 - 2 n} + q^{2 n - 1}\right) & 0 & 0 & 0 & 0 & 0 & 0 & 0 & 0 & 0 & 0 & 0\\0 & 0 & * &0 & 0 & 0 & 0 & 0 & q \left(q^{1 - 2 n} + q^{2 n - 1}\right) & 0 & 0 & 0 & 0 & 0 & 0 & 0\\0 & 0 & 0 & * &0 & 0 & 2 q^{2} + q^{2 - 2 n} - q^{4 - 2 n} & 0 & 0 & 2 q^{2} + q^{2 - 2 n} - q^{4 - 2 n} & 0 & 0 & q^{2} \left(- q^{1 - n} + q^{n - 1}\right)^{2} & 0 & 0 & 0\\0 & \frac{q^{1 - 2 n} + q^{2 n - 1}}{q} & 0 & 0 & * &0 & 0 & 0 & 0 & 0 & 0 & 0 & 0 & 0 & 0 & 0\\0 & 0 & 0 & 0 & 0 & * &0 & 0 & 0 & 0 & 0 & 0 & 0 & 0 & 0 & 0\\0 & 0 & 0 & \left(\frac{1}{q}\right)^{2 n} - \left(\frac{1}{q}\right)^{2 n - 2} + 2 & 0 & 0 & * &0 & 0 & \left(- q^{1 - n} + q^{n - 1}\right)^{2} & 0 & 0 & q^{2 n} - q^{2 n - 2} + 2 & 0 & 0 & 0\\0 & 0 & 0 & 0 & 0 & 0 & 0 & * &0 & 0 & 0 & 0 & 0 & q \left(q^{1 - 2 n} + q^{2 n - 1}\right) & 0 & 0\\0 & 0 & \frac{q^{1 - 2 n} + q^{2 n - 1}}{q} & 0 & 0 & 0 & 0 & 0 & * &0 & 0 & 0 & 0 & 0 & 0 & 0\\0 & 0 & 0 & \left(\frac{1}{q}\right)^{2 n} - \left(\frac{1}{q}\right)^{2 n - 2} + 2 & 0 & 0 & \left(- q^{1 - n} + q^{n - 1}\right)^{2} & 0 & 0 & * &0 & 0 & q^{2 n} - q^{2 n - 2} + 2 & 0 & 0 & 0\\0 & 0 & 0 & 0 & 0 & 0 & 0 & 0 & 0 & 0 & * &0 & 0 & 0 & 0 & 0\\0 & 0 & 0 & 0 & 0 & 0 & 0 & 0 & 0 & 0 & 0 & * &0 & 0 & q \left(q^{1 - 2 n} + q^{2 n - 1}\right) & 0\\0 & 0 & 0 & \frac{\left(- q^{1 - n} + q^{n - 1}\right)^{2}}{q^{2}} & 0 & 0 & \left(\frac{1}{q}\right)^{2 - 2 n} - \left(\frac{1}{q}\right)^{4 - 2 n} + \frac{2}{q^{2}} & 0 & 0 & \left(\frac{1}{q}\right)^{2 - 2 n} - \left(\frac{1}{q}\right)^{4 - 2 n} + \frac{2}{q^{2}} & 0 & 0 & * &0 & 0 & 0\\0 & 0 & 0 & 0 & 0 & 0 & 0 & \frac{q^{1 - 2 n} + q^{2 n - 1}}{q} & 0 & 0 & 0 & 0 & 0 & * &0 & 0\\0 & 0 & 0 & 0 & 0 & 0 & 0 & 0 & 0 & 0 & 0 & \frac{q^{1 - 2 n} + q^{2 n - 1}}{q} & 0 & 0 & * &0\\0 & 0 & 0 & 0 & 0 & 0 & 0 & 0 & 0 & 0 & 0 & 0 & 0 & 0 & 0 & *\end{array}\right].
$
}
where the diagonal entries are chosen so that the rows sum to $0$. 
\begin{remark}
    All the off-diagonal entries in the generator above are non-negative when $q\in (0,1)$ and $n\in\mathbb{N}$.
\end{remark}
\begin{remark}
    The jump between $(3,0)$ and $(0,3)$ has none zero rate, which means we allow two particles to jump at the same time, this is not usual in interacting particle systems. In \cite{KLLPZ}, the model was described to have the clocks located at the bonds between adjacent vertex sites, thus we can have  jumps between $(3,0)$ and $(0,3)$.
\end{remark}

Letting $\mathcal{L}$ denote the $16\times16$ matrix above, the generator in general is given by 
$$
\mathcal{L}^{1,2} + \mathcal{L}^{2,3} + \ldots +\mathcal{L}^{L-1,L}
$$
where $\mathcal{L}^{x,x+1}$ is usual notation, denoting the matrix acting on adjacent lattice sites $x$ and $x+1$. Note that when there are only species $1$ or $2$ particles, then the jump rates are
$$
q^{\pm 1} (q^{1-2n}+q^{2n-1}),
$$
making it a usual ASEP with time rescaled by $(q^{1-2n}+q^{2n-1})$. However, the parameter $n$ also appears in the interactions between the species of particles.

\subsection{Duality}
Two Markov processes $X,Y$ with  corresponding state spaces $\mathcal{X}, \mathcal{Y}$ are said to be dual with respect to a function $D : \mathcal{X} \times \mathcal{Y} \to \R$ if, for all $x \in \mathcal{X}, y \in \mathcal{Y}$, and $t\geq 0$, we have
\begin{equation}\label{Duality_Defn}
    \E_x[D(X(t),y)] = \E_y[D(x,Y(t))].
\end{equation}
When the processes in question are independent copies of each other, we refer to the above property as "self-duality."\\

 The following condition, called the "interlacing property", is equivalent to Markov duality:

\begin{equation}\label{Interweaving}
    L_XD=DL_Y^T,
\end{equation}
where $L_X$, $L_Y$ are matrix forms of the generators for the processes $X$, $Y$, and $D$ is the matrix whose entries are outputs of the duality function (all these matrices are assumed to index their rows/columns with $x \in \mathcal{X}$ and $y \in \mathcal{Y}$ in an identical manner). In the case of self-duality, the interlacing property becomes
\begin{equation}
    LD=DL^T.
\end{equation}
We note that this interlacing property is particularly useful for computationally verifying Markov duality for a fixed number of sites.

\subsection{$q$--deformed notation}\label{qdn}
 Define $q$-exponential, the $q$-analog of an exponential function, as \[\text{exp}_q(x):= \sum_{n = 0}^{\infty} \frac{x^n}{\{n\}_q!} \]
 where
\[\{n\}_q!= \prod_{k=1}^{n} \{n\}_q,  \qquad  \{n\}_q=\frac{1-q^n}{1-q}, \]
with $\{0\}_q! = 1.$ \\

 We define the q-Pochhammer symbol for $a \in \R$ and $m \in \N$  as follows:
\begin{equation}\label{QPoch_Defn}
        (a;q)_m := \prod_{k=0}^{m-1}(1-aq^k) = (1-a)(1-aq)\cdots(1-aq^{m-1}),    
\end{equation}
 as well as
\begin{equation}\label{QPoch_Inf_Defn}
        (a;q)_{\infty} := \prod_{k=0}^{\infty}(1-aq^k) = (1-a)(1-aq)(1-aq^2)\cdots,    
\end{equation}
 and also employing the notation $(a)_m := (a;q^2)_m$ and $(a)_\infty := (a;q^2)_\infty.$\\

 We can rewrite the $q$-binomial coefficient in terms of this notation:
\begin{equation}\label{QBinom_Defn}
    \binom{n}{k}_q = (-1)^k q^{k(n+1)}\frac{(q^{-2n})_k}{(q^2)_k}.
\end{equation}
Define the new $q$-exponentials
$$
e_{q}(z)=\sum_{n=0}^{\infty} \frac{z^{n}}{(q ; q)_{n}}=\frac{1}{(z ; q)_{\infty}}, \text{ for } \quad|z|<1, \quad \text { and } \quad \mathcal{E}_{q}(z)=\sum_{n=0}^{\infty} \frac{q^{\binom{n}{2}}z^{n}}{(q ; q)_{n}}=(-z ; q)_{\infty}.
$$
    
\subsection{Algebraic Definitions}

\subsubsection{The Lie Algebra $\mathfrak{so}_{2 m}$}\label{diag}

In this paper, we examine the $\mathfrak{so}_{2 m}$ Lie algebra, which is defined as the following set of matrices:
\begin{equation} 
\begin{split}
\label{Definition_so2n}
\mathfrak{so}_{2m}(\mathbb{C})=\left\{\left(\begin{array}{ll} A & B \\ 
C & D
\end{array}\right) \Bigg|A, B, C, D \in \mathbb{C}^{m \times m}, A=-D^{T}, B=B^{T}, C=C^{T}\right\}.
\end{split}
\end{equation} 

 Define the Cartan subalgebra $\mathfrak{h}$ of $\mathfrak{so}_{2m}$ as the subalgebra of diagonal matrices in $\mathfrak{so}_{2m}$. 

 Since $\mathfrak{so}_{2m}$ is a semisimple Lie algebra, $\mathfrak{g}=\mathfrak{so}_{2m}$ can be written as a direct sum
 
 $$
\mathfrak{g} = \mathfrak{h} \oplus \bigoplus_{\beta_i} \mathfrak{g_{\beta_i}}
$$
where $\beta_i$ are linear functionals corresponding to the positive simple roots of $\mathfrak{so}_{2m}$
and $\mathfrak{g_\beta}$ are the one-dimensional eigenspaces of the adjoint representation of $\mathfrak{h}$, i.e. for each $h\in\mathfrak{h}$ and $g\in\mathfrak{g_\beta}$, $[h,g] = \beta(h) g.$\\ 

 The $\mathfrak{so}_{2 m}$ Lie Algebra corresponds to a type $D_m$ Dynkin diagram. We can define a Cartan matrix to encode the $D_m$ Dynkin diagram as follows: an entry -1 in the Cartan matrix indicates a single edge between nodes i and j and an entry of 0 indicates no connection between the nodes. 
\begin{align*}
a_{i j}= \begin{cases}2, & i=j,\\ -1, & \{i, j\}=\{m-2, m\} \text { or }\{k, k+1\}, 1 \leq k \leq m-2, \\ 0, & \text { otherwise. }\end{cases}
\end{align*}
Note that since $\beta_i$ correspond to the positive simple roots of $\mathfrak{so}_{2m}$,
\begin{align*}
    a_{ij} = 2 \frac{(\beta_i,\beta_j)}{(\beta_i,\beta_i)}=(\beta_i,\beta_j)
\end{align*}
due to the properties of root systems.
\medskip

\subsubsection*{Diagram Automorphisms of $\mathfrak{so}_{2 m}$}

 \begin{definition}
 A diagram automorphism $\phi$ permutes the nodes of a Dynkin diagram while preserving its edge–vertex connectivity, i.e. $a_{i j}=a_{\phi (i) \phi (j)} $ for all $1 \leq i,j \leq m.$\end{definition} \medskip
 
 Let Aut($\mathfrak{g})$ denote the group of diagram automorphisms of a Lie Algebra $\mathfrak{g}$. We have that 
\begin{align*}
\begin{split}
    &\text{Aut}(D_m) \cong \mathbb{Z}/2 \quad \quad \quad \text{for } m \ge 5 \text{ and } m = 2,3\\
    &\text{Aut}(D_4) \cong S_3 \quad \ \quad \quad \text{The symmetric group on 3 elements}
\end{split}
\end{align*}

In this paper, our focus is on $\mathfrak{so}_{6}$ and $\mathfrak{so}_{8}$. In the first case, Aut($D_m$) corresponds to the transposition of the two final nodes $\{id, \,(m-1,m)\}$, while the special case of $m=4$ yields the automorphisms $\{id, \,(1,3), \,(1,4), \, (3,4), \, (1,3,4), \, (1,4,3)\}$. A diagrammatic illustration is shown below.

\begin{figure}[!h]
    \centering
    \includegraphics[width=8cm]{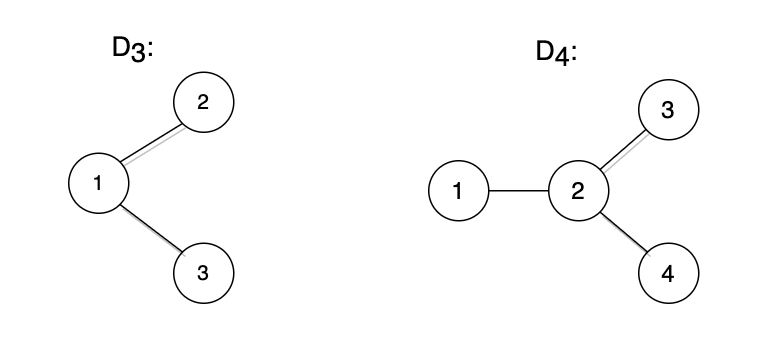}
    \label{fig:D_3}
\end{figure} 

 Let $\phi(i)$ be a diagram automorphism of $\mathfrak{so}_{2m}$ over $i = 1, \ldots,m$ such that $\phi ^2$=id. \\ 
The non-trivial possibilities for $\phi$ are:\\   $$\mathfrak{so}_{6}: (2,3); $$
$$\mathfrak{so}_{8}: (1,3),(1,4),(3,4).$$

\subsubsection{The quantized enveloping algebra $\mathcal{U}_{q}\left(\mathfrak{s o}_{2 m}\right)$}

 A primary motivation for using algebraic methods in probability stems from taking universal enveloping algebra's of an underlying Lie algebra $\mathfrak{g}$, and q-deforming it into the quantized enveloping algebra $\mathcal{U}_{q}\left( \mathfrak{g}\right)$.

 In this paper, we focus on the quantum group  $\mathcal{U}_{q}\left(\mathfrak{s o}_{2 m}\right)$, the quantized enveloping algebra generated by $\left\{E_{i}, F_{i}, q^{H_{i}}: 1 \leq i \leq m\right\}$ satisfying the following relations:
 \begin{equation}
\begin{split}
\label{UqSo2nRelations}
\left[E_{i}, F_{i}\right]=\frac{q^{H_{i}}-q^{-H_{i}}}{q-q^{-1}}, \\
q^{H_{i}} E_{j}=q^{\left(\beta_{i}, \beta_{j}\right)} E_{j} q^{H_{i}}, \quad \quad \quad  &q^{H_{i}} F_{j}=q^{-\left(\beta_{i}, \beta_{j}\right)} F_{j} q^{H_{i}},\\
E_{i}^{2} E_{j}+E_{j} E_{i}^{2}=(q+q^{-1}) E_{i} E_{j} E_{i}, \quad \quad &F_{i}^{2} F_{j}+F_{j} F_{i}^{2}=(q+q^{-1}) F_{i} F_{j} F_{i}.
\end{split}
\end{equation}

Note that we use $K_i:= q^{H_{i}}$ throughout the rest of the paper. \\

\subsubsection{Coproduct structure of $\mathcal{U}_{q}\left(\mathfrak{s o}_{2 m}\right)$}

We define the coproduct on $\mathcal{U}_{q}\left(\mathfrak{s o}_{2 m}\right)$ by specifying the coproduct on its generators: 
\begin{equation}
\begin{split}
\label{coprodDefinition}
    &\Delta: \mathcal{U}_{q}\left(\mathfrak{s o}_{2 m}\right) \to \mathcal{U}_{q}\left(\mathfrak{s o}_{2 m}\right) \otimes \mathcal{U}_{q}\left(\mathfrak{s o}_{2 m}\right) \text{ via: }\\
    \Delta(E_i) = E_i \otimes 1 + K_i &\otimes E_i, \quad \quad \quad  \Delta(F_i) = 1 \otimes F_i + F_i \otimes K_i^{-1}, \quad \quad \quad \Delta(K_i) = K_i \otimes K_i. 
\end{split}
\end{equation}
The coproduct is extended to the entire quantum group by defining it to be an algebra homomorphism.

 We will also need to define higher order powers of $\Delta$ from $\mathcal{U}_{q}\left(\mathfrak{s o}_{2 m}\right)$ to arbitrary tensor products of copies of $\mathcal{U}_{q}\left(\mathfrak{s o}_{2 m}\right)$. We again do so by specifying actions on the generators of $\mathcal{U}_{q}\left(\mathfrak{s o}_{2 m}\right)$:
\begin{equation}
\begin{split}
\label{iteratedCoprod}
    &\Delta^L: \mathcal{U}_{q}\left(\mathfrak{s o}_{2 m}\right) {\longrightarrow} \underbrace{\mathcal{U}_{q}\left(\mathfrak{s o}_{2 m}\right) \otimes \ldots \otimes \mathcal{U}_{q}\left(\mathfrak{s o}_{2 m}\right)}_{L+1 \text{ times}} \text{ via: }
    \\&\Delta^L(E_i) = \sum_{j=0}^L \underbrace{K_i \otimes \ldots \otimes K_i}_{j \text{ times}} \otimes E_i \otimes \underbrace{1 \otimes \ldots \otimes 1}_{L-j\text{ times}},
    \\&\Delta^L(F_i) = \sum_{j=0}^L \underbrace{1 \otimes \ldots \otimes 1}_{j \text{ times}} \otimes F_i \otimes \underbrace{K_i^{-1} \otimes \ldots \otimes  K_i^{-1}}_{L-j\text{ times}},
    \\&\Delta^L(K_i) = \underbrace{K_i \otimes \ldots \otimes K_i}_{L \text{ times}}.
\end{split}
\end{equation}

 An induction proof on $L$ shows that these indeed are the iterated coproducts on the generators in (\ref{coprodDefinition}).
\medskip

 We refer to the previous literature  \cite{Kassel} \cite{KlimykSchmudgen} for a general theory of quantum groups and their Hopf-algebra structures. In this paper, our focus is on the product, coproduct, and $*$-structures on $U_{q}\left(\mathfrak{s o}_{2 m}\right)$.\\

\subsubsection{Representation theory of $\mathcal{U}_{q}\left(\mathfrak{s o}_{2 m}\right)$}
Let  $\mathscr{E}_{i,j}$ denotes the indicator matrix with $1$ in entry $(i,j)$ and 0 elsewhere. \medskip


 The representations of $\mathcal{U}_{q}\left(\mathfrak{s o}_{2 m}\right)$ are built upon the fundamental representations of $\mathfrak{s o}_{2 m}$ from \cite{KLLPZ}. We define $\rho: \mathcal{U}_{q}\left(\mathfrak{s o}_{2 m}\right) \to \mathrm{Hom}(\mathbb{C}^{2m},\mathbb{C}^{2m}) $ as follows for all $1 \leq i \leq m$: 
\begin{equation}
\begin{split}
\label{UqSo2n Representation}
&\rho(E_{i})=\begin{cases}\mathscr{E}_{i, i+1}-\mathscr{E}_{m+i+1, m+i}, & 1 \leq i \leq m-1,\\ \mathscr{E}_{m-1,2 m}-\mathscr{E}_{m, 2 m-1}, & i=m,\end{cases}   \quad \quad \quad   \rho(F_{i})=\begin{cases}\mathscr{E}_{i+1, i}-\mathscr{E}_{m+i, m+i+1}, & 1 \leq i \leq m-1, \\ -\mathscr{E}_{2 m-1, m}+\mathscr{E}_{2 m, m-1}, & i=m,\end{cases}\\
\\&\rho(H_{i})=\begin{cases}\mathscr{E}_{i, i}-\mathscr{E}_{i+1, i+1}-\mathscr{E}_{m+i, m+i}+\mathscr{E}_{m+i+1, m+i+1}, & 1 \leq i \leq m-1, \\ \mathscr{E}_{m-1, m-1}+\mathscr{E}_{m, m}-\mathscr{E}_{2 m-1,2 m-1}-\mathscr{E}_{2 m, 2 m}, & i=m,\end{cases}\\
\\&\rho({K_i}) = q^{\rho(H_i)},   \qquad \qquad \qquad \quad \rho({K_i}^{-1}) = q^{-\rho(H_i)}. 
\end{split}
\end{equation}

\subsubsection{ $*-$structure on $\mathcal{U}_{q}\left(\mathfrak{s o}_{2 m}\right)$}
For later use, we introduce  explicit forms for the $*$--algebra structure on $U_{q}\left(\mathfrak{s o}_{2 m}\right)$. 

It can be checked easily that 
\item  \begin{equation}\label{eq: 1}
     K_i^*=K_{i}, \quad E_i^*=F_{i}, \quad F_i^*=E_{i}
\end{equation} gives a $*-$structure on $U_{q}\left(\mathfrak{s o}_{2 m}\right)$.

The $*-$structure is not unique,  there are more based on the automorphism $\phi$ of the Dynkin diagram of $U_{q}\left(\mathfrak{s o}_{2 m}\right)$ as defined in section \ref{diag}.

\begin{lemma}
The quantum groups $U_{q}\left(\mathfrak{s o}_{2 m}\right)$ is a Hopf $*$-algebra with 
$*:U_{q}\left(\mathfrak{s o}_{2 m}\right)  \rightarrow U_{q}\left(\mathfrak{s o}_{2 m}\right)$ defined as follows:
\begin{equation}\label{eq: 2}
     K_i^*=K_{\phi (i)}, \quad E_i^*=K_{\phi (i)}F_{\phi (i)}, \quad F_i^*=E_{\phi (i)}K^{-1}_{\phi(i)}.
\end{equation}

\end{lemma}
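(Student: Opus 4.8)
The plan is to verify directly that the three defining relations in \eqref{eq: 2} are compatible with both the algebra structure \eqref{UqSo2nRelations} and the Hopf structure \eqref{coprodDefinition}, and that $*$ is an antilinear antihomomorphism with $*^2 = \mathrm{id}$. First I would check that the prescribed assignment extends to a well-defined antilinear algebra antihomomorphism: since $\phi$ is an involutive diagram automorphism, there is an induced algebra automorphism $\tau_\phi$ of $U_q(\mathfrak{so}_{2m})$ sending $E_i \mapsto E_{\phi(i)}$, $F_i \mapsto F_{\phi(i)}$, $K_i \mapsto K_{\phi(i)}$ (well-definedness of $\tau_\phi$ is immediate from $a_{ij} = a_{\phi(i)\phi(j)}$, hence $(\beta_i,\beta_j) = (\beta_{\phi(i)},\beta_{\phi(j)})$, which preserves the $q$-Serre relations and the $K$-$E$, $K$-$F$ commutation relations). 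Then the proposed $*$ is the composition of $\tau_\phi$ with the "standard" Chevalley-type involution; the cleanest route is to observe that \eqref{eq: 2} agrees with $\tau_\phi$ followed by the known compact involution, so that showing it is a $*$-structure reduces to checking the single automorphism-twisted case, or alternatively to a brute-force check on generators.

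The key steps, in order: (1) confirm antimultiplicativity on the relations. For the Serre relation $E_i^2 E_j + E_j E_i^2 = (q+q^{-1}) E_i E_j E_i$, apply $*$ and reverse all products; one needs $(E_i^*)^2 E_j^* + E_j^* (E_i^*)^2 = (q+q^{-1}) E_i^* E_j^* E_i^*$ after accounting for reversal, and substituting $E_i^* = K_{\phi(i)} F_{\phi(i)}$ this becomes a Serre-type identity in the $F$'s twisted by powers of $K$, which follows from the $F$-Serre relation for $\phi(i),\phi(j)$ together with the $K$-$F$ commutation and $(\beta_{\phi(i)},\beta_{\phi(j)}) = (\beta_i,\beta_j)$. (2) Check the mixed relation $[E_i,F_i] = (K_i - K_i^{-1})/(q-q^{-1})$: applying $*$ (antimultiplicative) sends the left side to $F_i^* E_i^* - E_i^* F_i^* = E_{\phi(i)}K_{\phi(i)}^{-1} K_{\phi(i)} F_{\phi(i)} - K_{\phi(i)} F_{\phi(i)} E_{\phi(i)} K_{\phi(i)}^{-1}$, and one simplifies using the $\phi(i)$-instance of the mixed relation to get $(K_{\phi(i)} - K_{\phi(i)}^{-1})/(q - q^{-1})$, which is exactly $*$ applied to the right side since $K_i^* = K_{\phi(i)}$ and $(K_i^{-1})^* = K_{\phi(i)}^{-1}$. (3) Verify $*^2 = \mathrm{id}$ on generators: $(K_i^*)^* = K_{\phi(i)}^* = K_{\phi^2(i)} = K_i$ since $\phi^2 = \mathrm{id}$; $(E_i^*)^* = (K_{\phi(i)} F_{\phi(i)})^* = F_{\phi(i)}^* K_{\phi(i)}^* = E_{\phi^2(i)} K_{\phi^2(i)}^{-1} K_{\phi^2(i)} = E_i$, and similarly for $F_i$. (4) Verify the Hopf $*$-compatibility $\Delta \circ * = (* \otimes *) \circ \Delta^{\mathrm{op}}$ (or the appropriate convention, with the flip): check on $E_i$, using $\Delta(E_i) = E_i \otimes 1 + K_i \otimes E_i$ and the formulas for $\Delta(F_i)$, $\Delta(K_i)$; this is where the specific coproduct conventions and the appearance of $K_{\phi(i)}$ in $E_i^*$ conspire correctly, and one also checks the counit and antipode compatibilities.

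The main obstacle I anticipate is step (4), the compatibility of $*$ with the coproduct, and in particular getting all the $K$-factors and the order of tensor factors to line up — the standard subtlety is that a Hopf $*$-structure requires $*$ to be a coalgebra map into the opposite coalgebra (i.e. $\Delta(a^*) = \sum a_{(2)}^* \otimes a_{(1)}^*$), and verifying this for $E_i^* = K_{\phi(i)} F_{\phi(i)}$ forces one to expand $\Delta(K_{\phi(i)} F_{\phi(i)}) = \Delta(K_{\phi(i)})\Delta(F_{\phi(i)})$ and match it termwise against $(* \otimes *)$ applied to the flipped coproduct of $E_i$; the bookkeeping with $K_i \otimes E_i$ versus $E_i \otimes 1$ and the resulting $K_{\phi(i)}^{\pm 1}$ terms is the only genuinely delicate computation. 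Everything else is a routine, if somewhat tedious, generator-by-generator check, and the involutivity $\phi^2 = \mathrm{id}$ is used precisely to ensure $*$ is an involution rather than merely an antiautomorphism of order dividing $4$. I would present the argument by first isolating the auxiliary automorphism $\tau_\phi$, then reducing \eqref{eq: 1} to the $\phi = \mathrm{id}$ case of \eqref{eq: 2}, and finally doing the twisted computation once.
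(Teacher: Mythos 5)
Your plan is sound and, where it overlaps with the paper, it matches: the paper's entire proof consists of the commutator check you carry out in your step (2), done for the single example $\phi=(2,3)$ in $\mathfrak{so}_6$, followed by ``similarly, $*$ preserves all the other relations.'' Your proposal is strictly more complete. The factorization through the diagram automorphism $\tau_\phi$ (justified by $a_{ij}=a_{\phi(i)\phi(j)}$) is a nice organizational device the paper does not use, and it correctly reduces everything to one twisted computation. More importantly, you check the coproduct compatibility and $*^2=\mathrm{id}$, neither of which the paper addresses at all even though the lemma asserts a \emph{Hopf} $*$-algebra structure; your step (4) is genuinely needed content, not optional thoroughness. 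One correction to your step (4): with the coproduct \eqref{coprodDefinition} and the componentwise tensor involution $(a\otimes b)^*=a^*\otimes b^*$, the condition that actually holds for \eqref{eq: 2} is the unflipped one, $\Delta(a^*)=(*\otimes *)\Delta(a)$ — e.g.\ $\Delta(E_i^*)=\Delta(K_{\phi(i)}F_{\phi(i)})=E_i^*\otimes 1+K_{\phi(i)}\otimes E_i^*=(*\otimes *)(E_i\otimes 1+K_i\otimes E_i)$ — whereas the flipped version you wrote first fails. This is worth pinning down precisely, because the same check shows that the ``easy'' structure \eqref{eq: 1} ($E_i^*=F_i$) is only an algebra $*$-structure and is \emph{not} coproduct-compatible in either convention; the $K$-factors in \eqref{eq: 2} are exactly what make the Hopf compatibility work.
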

\begin{proof}


 It is also easy to see that the $*$ defined by \eqref{eq: 2} induces a $*-$structure with   $\phi$ outlined in section \ref{diag}, i.e. $*$ preserves $\mathcal{U}_{q}\left(\mathfrak{s o}_{2 m}\right)$ relations.
For instance, take the non-trivial $\phi$ for $\mathfrak{so}_{6}$ (i.e. $\phi=(2,3)$). Then,
$$K_2^*=K_3,\ E_2^*=K_3F_3,\ F_2^*=E_3K^{-1}_3,$$
\begin{align*}[E_{2}, F_{2}]^*=(\frac{K_{2}-K_{2}^{-1}}{q-q^{-1}})^*=\frac{K_{3}-K_{3}^{-1}}{q-q^{-1}}=[E_{3}, F_{3}]=E_3F_3-F_3E_3 \\ =E_3F_3-K_3F_3E_3K^{-1}_3=E_3K^{-1}_3K_3F_3-K_3F_3E_3K^{-1}_3\\
=[E_3K^{-1}_3,K_3F_3]=[F_{2}^*, E_{2}^*].
\end{align*}
Similarly, $*$ preserves all the other relations in \eqref{UqSo2nRelations}. Thus,  \eqref{eq: 2} gives a $*-$structure  on $U_{q}\left(\mathfrak{s o}_{2 m}\right)$. \\
\end{proof}
\begin{remark}
    It is conjectured that we can get different duality from different $*-$structures but we leave it to future work. In this paper, we will only make use of \eqref{eq: 1}.
\end{remark}
\section{Results}

\subsection{Orthogonal Polynomial Duality}
We first state some important facts for later use.
Define measure
$$
\mu_{\alpha}(\xi)=\prod_{x=1}^{L} \alpha^{\xi^{x}}q^{-2 x \xi^{x}},
$$
the reversible measure for type $D$  ASEP which arises from the ground state is given by $\nu(\eta)=\mu_{\alpha_1}(\eta_1)\mu_{\alpha_2}(\eta_2)$ (see Proposition 1.3 of \cite{KLLPZ}).

Define the $q$--hypergeometric function  
 ${_2\varphi_1}$  as:
\[
{ }_{2} \varphi_{1}\left(\begin{array}{c}
        a, b \\
        c
    \end{array} ;q, z\right):=\sum_{k=0}^{\infty} \frac{(a ; q)_{k}(b ; q)_{k}}{(c ; q)_{k}} \frac{z^{k}}{(q ; q)_{k}}
\]
and define the $q$-Krawtchouk polynomials  as:
\[
K_n(q^{-x},p,c,q) = { }_{2} \varphi_{1}\left(\begin{array}{c}
        q^{-x}, q^{-n} \\
        q^{-c}
    \end{array} ;q, pq^{n+1}\right).
\]



Then define function
\begin{equation}
    D^L_{\alpha_i}(\xi_i,\eta_i)=\prod_{x=1}^L K_{\eta_i^x}\left(q^{-2\xi_i^x}, p_i^x(\xi_i,\eta_i),1,q^2\right),
\end{equation}
where
\begin{equation}
    p_i^x(\xi_i,\eta_i)=\alpha_i^{-1}q^{-2\left(N_{x-1}^-(\xi_i)-N_{x+1}^+(\eta_i)\right)+2x-2}
\end{equation}
and
$\displaystyle{N_{x-1}^{-}\left(\xi_{i}\right)=\sum_{1\le y\le x-1} \xi_i^y} $ and $\displaystyle{N_{x+1}^{+}\left(\eta_{i}\right)=\sum_{x+1\le y\le L}\eta_i^y }$ denotes the number of particles in the configuration considered at the left, respectively right, of  site $x$.

$D_{\alpha_i}$ are orthogonal with respect to $\mu_{\alpha_i}$ when $\alpha_i \in(0, q^{-1+(2 L+1) })$  (see Theorem 3.2 in \cite{CFG} or Theorem 3.1 in \cite{FKZ}).

\begin{theorem}
Assume that $\alpha_1,\alpha_2 \in(0, q^{-1+(2 L+1) }).$ The type D ASEP with parameters $(q,2,0)$ or $(q,3,0)$  is self-dual with respect to the orthogonal function

\[D^L_{\alpha_1, \alpha_2}(\eta, \xi) = D^L_{\alpha_1}(\eta_1, \xi_1) \cdot D^L_{\alpha_2}(\eta_2, \xi_2).\]
\end{theorem}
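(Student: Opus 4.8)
# Proof Plan

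The plan is to prove the interlacing identity $\mathcal{L}D = D\mathcal{L}^T$ where $D$ is the matrix of the duality function $D^L_{\alpha_1,\alpha_2}$, exploiting the tensor/product structure of everything in sight. The key structural observation is that the type D ASEP generator with $\delta=0$ and $n=2,3$ decomposes, on the level of dynamics, into two single-species ASEP dynamics that are \emph{not} independent as processes (they share the lattice) but whose \emph{duality functions} multiply. So the strategy is: (i) reduce the global statement to a local one on two adjacent sites via the standard argument that $\mathcal{L} = \sum_x \mathcal{L}^{x,x+1}$ and that the duality function, although it has the nonlocal shifts $N^-_{x-1}(\xi_i)$ and $N^+_{x+1}(\eta_i)$ in the parameter $p_i^x$, satisfies a compatibility that lets the nonlocal factors commute through each local generator; (ii) on two sites, verify the duality directly using the $16\times 16$ matrix given in the excerpt, factoring $D^2_{\alpha_1,\alpha_2} = D^2_{\alpha_1}\otimes D^2_{\alpha_2}$ appropriately under the identification of $\Omega_2 \cong \{0,1,2,3\}^2$ with pairs of occupation numbers; (iii) invoke the known single-species result of \cite{CFG}/\cite{FKZ} that the $q$-Krawtchouk product $D^L_{\alpha}$ is a self-duality function for single-species ASEP (with time rescaled by $q^{1-2n}+q^{2n-1}$), so that each ``pure-species'' block of the computation is already done, leaving only the genuinely two-species transitions — namely those involving the $(3,0)\leftrightarrow(0,3)$ double jump and the $(3,\cdot),(\cdot,3)$ states — to be checked by hand.

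Concretely, the first method (the probabilistic one promised in the introduction) would run an induction on $L$. The base case $L=2$ is the $16\times16$ verification. For the inductive step, I would write $\mathcal{L}_{L} = \mathcal{L}_{L-1}\otimes \mathrm{id} + \mathrm{id}^{\otimes(L-2)}\otimes \mathcal{L}^{L-1,L}$, and correspondingly observe that $D^L_{\alpha_1,\alpha_2}$ restricted to the first $L-1$ sites agrees with $D^{L-1}_{\alpha_1,\alpha_2}$ up to a relabeling of the $\alpha_i$ and an overall $q$-power coming from the boundary terms $N^+_{x+1}$ (which now runs to $L$ instead of $L-1$) — this is precisely the ``consistency'' property of the $q$-Krawtchouk duality functions used in \cite{CFG}. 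The contribution of the bond $(L-1,L)$ is then handled by the $L=2$ case applied to sites $L-1,L$, with the parameter $p_i^{L-1}$ absorbing the $N^-_{L-2}(\xi_i)$ factor from the left as a harmless constant. One checks that the shifts in the exponent of $p_i^x$ are exactly designed so that a particle at site $y<x-1$ or $y>x+1$ contributes the same $q$-power before and after any jump on the bond $(x,x+1)$, since such jumps preserve $N^-_{x-1}$ and $N^+_{x+1}$; this is what makes the local-to-global reduction go through.

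For the second (algebraic) method via $\mathcal{U}_q(\mathfrak{so}_6)$, the plan is: realize the two-site generator $\mathcal{L}$ (up to an additive diagonal and scalar) as the image under $\rho\otimes\rho$ of a symmetric element of $\mathcal{U}_q(\mathfrak{so}_6)$ — built from the Casimir-type element of \cite{KLLPZ} — restricted to the relevant $4$-dimensional subrepresentation obtained after discarding the two states. One then uses the $*$-structure \eqref{eq: 1} ($E_i^* = F_i$, $F_i^* = E_i$, $K_i^* = K_i$) together with the coproduct \eqref{coprodDefinition}: self-duality follows from the general principle that if $G$ is a generator lying in the image of a $*$-symmetric element and $D$ implements a change of basis to an orthogonal (with respect to the reversible measure) eigenbasis, then $\mathcal{L}D = D\mathcal{L}^T$ reduces to the statement that the $q$-Krawtchouk polynomials diagonalize the relevant lowering/raising combination — i.e., that they are the eigenfunctions arising from a ``ground-state transform'' of the $\mathcal{U}_q(\mathfrak{so}_6)$ action. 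The factorization $D^L_{\alpha_1,\alpha_2} = D^L_{\alpha_1}\cdot D^L_{\alpha_2}$ is then inherited from the fact that, for $n=2,3$ and $\delta=0$, the relevant element splits as a ``sum'' over the two species with no cross terms surviving on the non-discarded subspace, which is the same phenomenon that makes the reversible measure an independent product $\nu(\eta)=\mu_{\alpha_1}(\eta_1)\mu_{\alpha_2}(\eta_2)$.

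The main obstacle, I expect, is precisely the bookkeeping of the nonlocal parameter shifts $p_i^x(\xi_i,\eta_i)$ in the local-to-global reduction: one must check that the $q$-Krawtchouk three-term contiguous relations — which are what ultimately encode the single-bond duality in \cite{CFG} — remain valid bond-by-bond after the substitution $p \mapsto \alpha_i^{-1} q^{-2(N^-_{x-1}(\xi_i) - N^+_{x+1}(\eta_i)) + 2x-2}$, including at the genuinely two-species $(3,0)\leftrightarrow(0,3)$ transition where \emph{both} species jump simultaneously and hence both $p_1^x$ and $p_2^x$ shift at once. Verifying that these simultaneous shifts are compatible with the product structure $D^L_{\alpha_1}\cdot D^L_{\alpha_2}$ — rather than producing a cross term — is the crux; everything else is either the cited single-species result or a finite matrix check.
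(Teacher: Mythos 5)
Your proposal follows essentially the same route as the paper: the probabilistic argument is the paper's proof almost verbatim (induction on $L$ with the two--site $16\times16$ base case, splitting $\mathcal{L}^{L+1}=\mathcal{L}^{L}+\mathcal{L}^{L,L+1}$, and absorbing the nonlocal shifts $N^{-}_{x-1}(\xi_i)$, $N^{+}_{x+1}(\eta_i)$ into redefined parameters $\alpha_i$ so that the induction hypothesis, stated for all $\alpha$, applies), and your algebraic sketch matches the paper's use of the $*$--structure together with the two commuting $\mathcal{U}_q(\mathfrak{sl}_2)$ subalgebras generated by $(E_2,F_2,K_2)$ and $(E_3,F_3,K_3)$ and the $q$--exponential symmetries of \cite{CFG}. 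The one detail you omit in the algebraic method is that the $\mathfrak{so}_6$ fundamental representation produces extra factors of $-1$ (e.g.\ $E_2v_5=-v_4$) relative to the $\mathfrak{sl}_2$ setting, and the paper must check that these combine into an overall constant via $(-1)^{N_{v_3}(\xi)+N_{v_4}(\xi)}=(-1)^{N(\xi_1)+N(\xi_2)}$.
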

\begin{remark}
    The duality function does not depend on the parameter $n$, it is believed that this Theorem still holds true for general $n$. However, we don't pursue this direction in this paper.
\end{remark}



\begin{proof}
We will actually provide two proofs for this theorem, with one being probabilistic and one being algebraic.

The probabilistic proof will be based on induction on the number of sites. For the base case of $L=2$, one simply explicitly computes the multiplication of $16\times 16$ matrices. Details are omitted. 

Note that $$K_{\eta_i^x}\left(q^{-2\xi_i^x}, p_i^x(\xi_i,\eta_i),1,q^2\right)\neq 1 \text{ if and only if  }\xi_i^x=\eta_i^x=1.$$
Rewrite the duality function as 
\begin{equation*}
    D^L_{\alpha_i}(\eta, \xi) = \prod_{x \in C(\eta_i, \xi_i)} \left( 1 - \frac{q^{2(x - N_{x-1}^{-}(\xi_i) + N_{x+1}^{+}(\eta_i)}}{\alpha_i} \right),
\end{equation*}
where $C(\eta_i, \xi_i) \subseteq \Lambda_L$ denotes the collection of locations of the common sites where both $\eta_i$ and $\xi_i$ have a particle of species $i$.

We proceed using induction on the number of sites, $L$, where we consider to 'append' a site to the right of a configuration. 
The duality with two sites can be checked directly. We assume that for any parameter $\alpha_1$ and $\alpha_2$,
\begin{equation}
\mathcal{L}^{L}D^L_{\alpha_1,\alpha_2}(\eta,\xi)=\mathcal{L}^{L}D^L_{\alpha_1,\alpha_2}(\eta,\xi) ,
\end{equation}where 
$\mathcal{L}^{L}$ acts on $\eta$ on the LHS and on $\xi$ on the RHS.
First,  note that the  generator on $L+1$ sites can be decomposed naturally into two sums: $\mathcal{L}^{L+1}=\mathcal{L}^{L}+\mathcal{L}^{L,L+1}$, where $\mathcal{L}^{L,L+1}$ is the local generator between sites $L$ and $L+1$, which gives 
\begin{equation*}
    \mathcal{L}^{L+1}D^{L+1}_{\alpha_1, \alpha_2}(\eta, \xi)=\mathcal{L}^{L}D^{L+1}_{\alpha_1, \alpha_2}(\eta, \xi)+\mathcal{L}^{L,L+1}D^{L+1}_{\alpha_1, \alpha_2}(\eta, \xi).
\end{equation*}

 Note that 
 \begin{equation}
     D^{L+1}_{\alpha_1, \alpha_2}(\eta, \xi)=D^{L}_{\alpha_1 q^{-2\eta^{L+1}_1}, \alpha_2 q^{-2\eta^{L+1}_2}}(\eta^{[1,L]}, \xi^{[1,L]})\prod_{i=1}^2K_{\eta_i^{L+1}}\left(q^{-2\xi_i^{L+1}}, p_i^{L+1}(\xi_i,\eta_i),1,q^2\right),
 \end{equation}
where $\eta^{[1,L]}$ is the restriction of $\eta$ to the first $L$ sites.
 
 \begin{multline}\label{eq: 18}
   \mathcal{L}^{L}D^{L+1}_{\alpha_1, \alpha_2}(\eta, \xi)=  \mathcal{L}^{L} D^{L}_{\alpha_1 q^{-2\eta^{L+1}_1}, \alpha_2 q^{-2\eta^{L+1}_2}}(\eta^{[1,L]}, \xi^{[1,L]})\prod_{i=1}^2K_{\eta_i^{L+1}}\left(q^{-2\xi_i^{L+1}}, p_i^{L+1}(\xi_i,\eta_i),1,q^2\right)  \\
   =\mathcal{L}^{L} D^{L}_{\alpha_1 q^{-2\eta^{L+1}_1}, \alpha_2 q^{-2\eta^{L+1}_2}}(\eta^{[1,L]}, \xi^{[1,L]})\prod_{i=1}^2K_{\eta_i^{L+1}}\left(q^{-2\xi_i^{L+1}}, p_i^{L+1}(\xi_i,\eta_i),1,q^2\right) = \mathcal{L}^{L}D^{L+1}_{\alpha_1, \alpha_2}(\eta, \xi),
 \end{multline}
 where in the first line, $\mathcal{L}$ acts on $\xi$ and in the second line, it acts on $\eta$. In addition, the induction hypothesis is used in the second equality of \eqref{eq: 18}.

 Last, factor $ D^{L+1}_{\alpha_1, \alpha_2}$ into the first $L-1$ sites and the last two sites, the proof reduces to the two site case:
    \begin{equation}
     D^{L+1}_{\alpha_1, \alpha_2}(\eta, \xi)=D^{2}_{\alpha_1 q^{2N_{L-2}^-{\xi_1}}, \alpha_2 q^{2N_{L-2}^-{\xi_2}}}(\eta^{[L-1,L]}, \xi^{[L-1,L]})\prod_{i=1}^{2}\prod_{x=1}^{L-1} K_{\eta_i^x}\left(q^{-2\xi_i^x}, p_i^x(\xi_i,\eta_i),1,q^2\right),
 \end{equation}   
  thus
  \begin{equation*}
      \mathcal{L}^{L,L+1}D^{L+1}_{\alpha_1, \alpha_2}(\eta, \xi)=\mathcal{L}^{L,L+1}D^{L+1}_{\alpha_1, \alpha_2}(\eta, \xi)
  \end{equation*}
        where $\mathcal{L}^{L,L+1}$ acts on $\eta$ on the LHS and on $\xi$ on the RHS. This finishes the probabilistic proof. 

  Now for the algebraic proof. We show the proof for the parameters $(q,2,0)$ only since the proof for $(q,3,0)$ follows from the same argument. First, we fix a choice of $*-$structure of $U_q(\mathfrak{so}_{6})$ as following:
\begin{equation}
     K_i^*=K_{i}, \quad E_i^*=F_{i}, \quad F_i^*=E_{i}.
\end{equation}

Notice that the subalgebra of $U_q(\mathfrak{so}_{2m})$ generated by $E_i,F_i$ and $K_i$ is isomorphic to $U_q(\mathfrak{sl}_2)$.

Then for $U_q(\mathfrak{so}_{6})$ with parameter $(q,2,0)$, apply the arguments from \cite{CFG} to the subalgebra generated by $E_2,F_2$ and $K_2$ we obtain a symmetry, which results in $D_{\alpha_1}(\eta_1, \xi_1)$, and the subalgebra generated by $E_3,F_3$ and $K_3$ gives $ D_{\alpha_2}(\eta_2, \xi_2)$.

First, we recall from \cite{KLLPZ} the particle configurations as the following:
\begin{center}
\begin{tikzpicture}
    \node at (-5,0){$v_2=$};
    \node[draw,fill=white] at (-4,-0.25){$1$};
    \node[draw,fill=white] at (-4,0.25){$2$};
    
    \node at (-2.5,0){$v_3=$};
    \node[draw,fill=white] at (-1.5,-0.25){$2$};
    \node[circle,draw,fill=white] at (-1.5,0.25){};
    
     \node at (0,0){$v_4=$};
    \node[draw,fill=white] at (1,-0.25){$1$};
    \node[circle,draw,fill=white] at (1,0.25){};
    
    \node at (2.5,0){$v_5=$};
    \node[circle,draw,fill=white] at (3.5,0.25){};
    \node[circle,draw,fill=white] at (3.5,-0.25){};
\end{tikzpicture}
\end{center}

Different from \cite{CFG}, $E_2$ acting on $v_5$ yields an additional $-1$, same as $F_2$ acting on $v_4$, $F_3$ acting on $v_3$ and $E_3$ acting on $v_5$. We will show that in the algebraic construction of duality, those extra $-1$s together give a constant.

Now we define two operator similar to  $S^{tr}_\alpha$ and $\hat{S}_\alpha^{tr}$ defined in section 8.3 of \cite{CFG}  as follows:
\begin{equation}
   S_1=e_{q^2}\left(-\sqrt{\alpha_1}(1-q^2)\Delta^{L-1}(K_{2}^{\frac{1}{2}}E_{2})\right), 
\end{equation}

\begin{equation}
   \hat{S}_1=\mathcal{E}_{q^2}\left(\sqrt{\alpha_1}q^{-L-\frac{1}{2}}(1-q^2)\Delta^{L-1}(K_{2}^{-\frac{1}{2}}E_{2})\right).
   \end{equation}

Then using equation (92)  in \cite{CFG}  and letting $N(\cdot)$ denote the total number of particles in a configuration, $\hat{S}_1S_1^*$ acts as the following:
\begin{equation}
   <\xi|\hat{S}_1S_1^*|\eta>=D_{\alpha_1}(\xi_1,\eta_1)q^{N(\eta_1)-N(\xi_1)}\sqrt{\mu_{\alpha_1}(\xi_1)\mu_{\alpha_1}(\eta_1)}(-1)^{N(\eta_1)}(-1)^{N_{v_4}(\xi)+N_{v_4}(\eta)},
\end{equation}
where 
$N_{v_4}(\xi)$ is the number of $v_4$ in configuration $\xi$, i.e  the number of sites containing both species particles.

Similarly, define
\begin{equation}
   S_2=e_{q^2}\left(-\sqrt{\alpha_2}(1-q^2)\Delta^{L-1}(K_{3}^{\frac{1}{2}}E_{3})\right), 
\end{equation}

\begin{equation}
   \hat{S}_2=\mathcal{E}_{q^2}\left(\sqrt{\alpha_2}q^{-L-\frac{1}{2}}(1-q^2)\Delta^{L-1}(K_{3}^{-\frac{1}{2}}E_{3})\right),
   \end{equation}
   We have
\begin{equation}
   <\xi|\hat{S}_2S_2^*|\eta>=D_{\alpha_2}(\xi_2,\eta_2)q^{N(\eta_2)-N(\xi_2)}\sqrt{\mu_{\alpha_2}(\xi_2)\mu_{\alpha_2}(\eta_2)}(-1)^{N(\eta_2)}(-1)^{N_{v_3}(\xi)+N_{v_3}(\eta)}.
\end{equation}

Note that 
\begin{equation*}
    (-1)^{N_{v_3}(\xi)+N_{v_4}(\xi)}=(-1)^{N_{v_3}(\xi)+N_{v_4}(\xi)+2N_{v_2}(\xi)}=(-1)^{N(\xi_1)+N(\xi_2)},
\end{equation*}
Thus 
\begin{equation}\label{eq: 3}   <\xi|\hat{S}_2S_2^*\hat{S}_1S_1^*|\eta>=\text{const}\cdot D_{\alpha_1}(\xi_1,\eta_1)D_{\alpha_2}(\xi_2,\eta_2)\sqrt{\mu_{\alpha_1}(\xi_1)\mu_{\alpha_1}(\eta_1)\mu_{\alpha_2}(\xi_2)\mu_{\alpha_2}(\eta_2)}.
\end{equation}
 By a standard argument as in \cite{CGRS,CFG,FKZ,KLLPZ}, dividing the symmetry \eqref{eq: 3} by square root of reversible measures i.e. $\sqrt{\nu(\xi)\nu(\eta)}$, we get the desired duality function.  

To show orthogonality, notice that the duality function is an independent product of $D_{\alpha_i}$, it is orthogonal with respect to the reversible measure $\nu$.

\end{proof}

\subsubsection{Conjecture about Asymptotics}
We conjecture that the type $D$ ASEP fluctuates as two Tracy--Widom distributions, based on the Markov self--duality and the reversible measures having the same form as for the usual ASEP.

Let $x_i(m,t)$ the position of the $m-$th particle of class $i$ from the left at time t, recall that when only one species of particles is present, the type $D$ ASEP with parameter $(q,n,0)$ reduces to a usual ASEP with jump rates
$$
q^{\pm 1}(q^{1-2n}+q^{2n-1}),
$$
with drift to the right for $q\in (0,1)$.

\textbf{Conjecture.} Let $\tau$ depend on $t,q$ and $n$ as
$$
\tau = t \cdot ( q^{1-2n}+q^{2n-1}) \cdot \frac{q+q^{-1}}{q-q^{-1}}.
$$
Then for the type $D$ ASEP with parameters $(2,0)$ and $(3,0)$ on the infinite lattice, and step initial conditions (all lattice sites to the left of $0$ are completely occupied with particles of both species), there is the asymptotic limit
\begin{equation}
    \lim_{t\xrightarrow{}\infty}\mathbb{P}\left(\frac{x_i(m_i,\tau)-c_{1i}t}{c_{2i}t^{1/3}}\le s\right)=F_2(s)
\end{equation}
where $\sigma_i=m_i/t$, $ c_{1i}=-1+2\sqrt{\sigma_i}$, $c_{2i}=\sigma_i^{-1/6}(1-\sqrt{\sigma_i})^{2/3}$. Here $F_2(\cdot)$ is the usual Tracy--Widom distribution.






\subsection{A more efficient approach than non--negativity}
To motivate the next section, we provide  more context about \cite{KLLPZ}. The paper constructs explicitly Casimir elements of $\mathcal{U}_q(\mathfrak{so}_6)$ and $\mathcal{U}_q(\mathfrak{so}_8)$ using Lusztig's inner product. However, these are very difficult to compute explicitly, effectively requiring one to invert a matrix with 8!=40320 rows and columns. In fact, the proof in \cite{KLLPZ} was computer aided with Python. After computing the Casimir, it was found that in order to maintain non--negativity of the jump rates, several states had to be ``discarded'', which is why there are only $4^L$ configurations, rather than $6^L$ or $8^L$. 

Because of the difficulty of computing Casimir elements, it would be helpful to develop a more efficient method to determine if and when states need to be discarded. In this paper, the duality functions are orthogonal with respect to the reversible measures, which suggests that studying the reversible measures is a more efficient approach than computing Casimir elements. This section will generalize Proposition 2.5 of \cite{KLLPZ}.  Note that \cite{KLLPZ} does not really prove Proposition 2.5 (it is simply described as a calculation with no details), so the proof here will also fill in the gaps in \cite{KLLPZ}.

Consider the following basis vectors:
\label{basisOfC6}
\begin{equation}
\begin{split}
\label{basisC6}
v_1=(1,0,0,0,0,0) \qquad &v_4=(0,0,0,0,0,1)\\
v_2=(0,1,0,0,0,0) \qquad &v_5=(0,0,0,0,1,0)\\
v_3=(0,0,1,0,0,0) \qquad &v_6=(0,0,0,1,0,0)
\end{split}
\end{equation}
 Associate to each vector a particle configuration by:
$$
v_{2}=\begin{array}{c}
\boxed{2}\\\boxed{1}
\end{array} \quad 
v_{3}=\begin{array}{c}
\bigcirc\\\boxed{2}
\end{array} \quad 
v_{4}=\begin{array}{c}
\bigcirc\\\boxed{1}
\end{array} \quad 
v_{5}=\begin{array}{c}
\bigcirc\\\bigcirc
\end{array} \quad 
$$

 Let $v_{5}$ be the vacuum state and let $\Omega_{L}=v_{5}^{\otimes L}$. \\

 Through direct computation, we can see that the actions of the operators $E$,$F$, and $K$ on the basis vectors are as follows:
\begin{itemize}
    \item $E_{2}$, $E_{3}$ create particles of class one and class two, respectively,
    \item $F_{2}, F_{3}$ annihilate particles of class one and two, respectively,
    \item $K_{2}, K_{3}$ do not change the vectors, but produce factors of q as coefficients. 
\end{itemize}

We start with $\Omega_{N}=v_{5}^{\otimes N}$. By acting with $E_3$ on $\Omega_{N}$, we introduce vector $v_3$ (since $E_3v_5=-v_3$). Next, the action of $E_2$ introduces vectors $v_2$ and $v_4$ (since $E_2v_3=v_2$ and $E_2v_5=-v_4$). \\\\
Since we wanted to extend this proposition to include a subsequent action with $E_1$, we had to come up with an associating particle configuration for $v_1$ (since $E_1v_2=v_1$).

We extend the above proposition from \cite{KLLPZ} so as to include the action of $E_1$. From a probabilistic perspective, fixing the vacuum vector $\Omega_{L}=v_{5}^{\otimes L}$, only the actions of $E_2$ and $E_3$ have probabilistic interpretations (creation of class 1 and class 2 particles). 
\medskip

 From an algebraic perspective, since we are working with the structure$\,\,\mathcal{U}_{q}\left(\mathfrak{s o}_{6}\right)$ with generators $E_1, E_2, E_3$, the question of how do the representations of $E_i$'s act on the basis of $\mathbb{C}^6$ should not exclude $E_1$. 

\begin{theorem}
For any $M_0, M_{1}, M_{2}, L \in \mathbb{N}$ such that $M_1, M_2 \leq L - M_0 \leq L$,
$$
E_1^{M_{0}}E_{2}^{M_{1}} E_{3}^{M_{2}}\left|\Omega_{L}\right\rangle=\sum_{\eta} \mathcal{G}(\eta)|\eta\rangle,
$$where the sum is over all particle configurations $\eta$ on $L$ sites with $M_{1}$ first class particles and $M_{2}$ second class particles, and $M_0$ null sites, and 
\begin{align*}
|\mathcal{G}(\eta)|= Z_{L, M_0, M_{1}, M_{2}}^{-1} \prod_{x_{0} \in (A_0 \cap A_{1} \cap A_2)^c(\eta)} q^{x_{0}} \prod_{x_{1} \in A_{1}(\eta)} q^{-x_{1}} \prod_{x_{2} \in A_{2}(\eta)} q^{-x_{2}}   \prod_{x_{3} \in A_{0}(\eta)} q^{-2x_{3}}
\end{align*}
for some normalization constant $Z_{L, M_0, M_{1}, M_{2}}$.
\end{theorem}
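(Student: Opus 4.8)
The plan is to prove this by induction on the total number of creation operators applied, tracking how each of $E_1, E_2, E_3$ acts on the monomial states appearing in the expansion. First I would set up the bookkeeping: since $\Delta^{L-1}(E_i) = \sum_{j} K_i^{\otimes j} \otimes E_i \otimes 1^{\otimes (L-1-j)}$, applying a power $E_i^{M}$ to a tensor product state produces, after normal-ordering, a sum over ways of distributing the $M$ raising operators among the $L$ sites, with $q$-weights coming from the $K_i$ factors that must be commuted past. The key local facts are those quoted in the excerpt: $E_3 v_5 = -v_3$, $E_2 v_3 = v_2$, $E_2 v_5 = -v_4$, $E_1 v_2 = v_1$, together with the $K_i$-eigenvalues on each $v_k$ (computable from $\rho(H_i)$ in \eqref{UqSo2n Representation}). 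Because we only care about $|\mathcal{G}(\eta)|$, the signs can be ignored, which is what lets us sidestep the "discarded states" subtlety entirely — this is the conceptual payoff advertised in the surrounding text.

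The main computation is to show that after applying $E_3^{M_2}$ then $E_2^{M_1}$ then $E_1^{M_0}$ to $v_5^{\otimes L}$, the coefficient of a fixed target configuration $\eta$ factorizes into the claimed product over the site-sets $A_0(\eta), A_1(\eta), A_2(\eta)$ (the locations of null sites, class-1 particles, class-2 particles, in the appropriate filtered sense). I would do this in three stages. Stage one: $E_3^{M_2} v_5^{\otimes L}$ — this is essentially the single-species $q$-deformed creation computation already implicit in \cite{KLLPZ} and \cite{CFG}, giving a $q$-weight $\prod_{x \in A_2} q^{-2x}$-type factor (up to the $q$-binomial normalization) on the $v_3$/$v_5$ pattern; here one uses that $E_3^2 E_3$-type Serre relations and the fact that $E_3$ on a site already holding $v_3$ gives zero (exclusion). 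Stage two: apply $E_2^{M_1}$; now $E_2$ converts $v_3 \mapsto v_2$ (no $q$-factor hazard beyond the $K_2$ commutations) and $v_5 \mapsto v_4$, and one must recompute how the $K_2$'s interact with the already-present $v_3$'s — this is where the exponents $q^{-x_1}$ on $A_1$ and the modification of the $A_2$-weights arise. Stage three: apply $E_1^{M_0}$, converting $v_2 \mapsto v_1$ on $M_0$ of the doubly-occupied sites; since $E_1$ annihilates $v_3, v_4, v_5$ (only $v_2$ is in its range), the $M_0$ sites must be chosen among the $v_2$-sites, and commuting $K_1$ past the other vectors yields the $q^{x_0}$ factors on the complement set $(A_0 \cap A_1 \cap A_2)^c$. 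At each stage I would extract the purely combinatorial identity — a product over sites of a geometric-type $q$-weight times a $q$-multinomial normalization — and absorb all the global constants into $Z_{L,M_0,M_1,M_2}$.

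The hard part will be stage two and the interface between the stages: keeping straight which $K_i$-factors have already been "used up" when a later operator sweeps through, so that the exponents combine to exactly $q^{-x_1}$, $q^{-x_2}$, $q^{-2x_3}$ rather than something shifted. Concretely, the eigenvalue of $K_2$ on $v_3$ versus $v_5$ differs, and the eigenvalue of $K_1$ on $v_2$ versus $v_3, v_4, v_5$ differs, so the positional exponent picked up by an $E_i$ inserted at site $x$ depends on the configuration already built to its left — one has to check this dependence telescopes correctly into the stated product form. I expect the cleanest route is to not prove the three stages separately but to prove the combined statement by a single induction on $M_0 + M_1 + M_2$, peeling off one operator at a time from the left (i.e. one $E_1$, or if $M_0 = 0$ one $E_2$, etc.) and verifying that the recursion for $|\mathcal{G}(\eta)|$ matches the recursion satisfied by the right-hand side product — this reduces the whole thing to one generic commutation move plus a verification of base cases ($M_0 = M_1 = M_2 = 0$, trivial; and the pure $E_3$ case, which matches the known $q$-Krawtchouk ground-state computation). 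The normalization constant $Z$ never needs to be computed explicitly, since the theorem only asserts its existence.
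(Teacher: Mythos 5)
Your proposal is correct in substance and follows the same overall strategy as the paper: process the creation operators species by species in the order $E_3^{M_2}$, then $E_2^{M_1}$, then $E_1^{M_0}$, record the local actions $E_3 v_5 = -v_3$, $E_2 v_3 = v_2$, $E_2 v_5 = -v_4$, $E_1 v_2 = v_1$ together with the $K_i$-eigenvalues on each basis vector, and observe that the positional $q$-exponent attached to an $E_i$-insertion at site $x$ is the product of the $K_i$-eigenvalues of the vectors to its left. You also correctly isolate the only genuinely delicate stage: $K_1$ has eigenvalue $q^{-1}$ on $v_2$, $q$ on $v_5$, and $1$ on $v_3,v_4$, so the weight of an $E_1$-insertion depends on the configuration already built, and one must check that this dependence telescopes into the stated product; this is precisely the $\psi=-\bigl((2T_i-2)-\lambda_1(T_i)-\lambda_2(T_i)+\lambda_5(T_i)\bigr)$ bookkeeping in the paper's proof. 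Where you differ is the device for expanding the powers of $\Delta^{L-1}(E_i)$: you propose a direct induction on $M_0+M_1+M_2$, peeling off one operator at a time and absorbing the resulting normal-ordering constants into $Z$; the paper instead wraps each operator in $\exp_{q^2}\bigl(\Delta^{L-1}(E_i)\bigr)$, invokes the pseudo-factorization property of the $q$-exponential (from \cite{CGRS}) together with the nilpotency $\rho(E_i)^2=0$ in the fundamental representation to reduce each factor to $1+(\text{local term})$, and then reads off the coefficient of each configuration from an ordinary binomial expansion over subsets of sites --- no induction on the number of operators is needed, and the $\{M_i\}_{q^2}!$ normalizations are packaged automatically. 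Your route works, but it obliges you to verify separately that the distinct summands of $\Delta^{L-1}(E_i)$ $q^{\pm 2}$-commute, so that the normal-ordering factor attached to a fixed site-subset depends only on $M_i$ and $L$ and can legitimately be pushed into $Z_{L,M_0,M_1,M_2}$; the generating-function route buys exactly this for free. One small correction: what your ``stage one'' actually uses is not a Serre relation but simply $\rho(E_3)^2=0$, i.e.\ at most one class-2 particle per site.
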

\medskip

\begin{remark}
    $A_0 \subseteq [L]$ are the locations of the null sites, $A_1 \subseteq [L]$ are the locations of the class 1 particles, $A_2 \subseteq [L]$ are the locations of the class 2 particles, $(A_0 \cup A_1 \cup A_2)^c \subseteq [L]$ are the sites with no particles (excluding null sites which will automatically have no particles).  
\end{remark}
\medskip

\begin{proof}
    We use the first q-exponential defined in Section \ref{qdn} to encode all possible values 
    of $L, M_0, M_1, M_2$. Equivalently, it suffices analyzing the following expression:
    \begin{equation}
        \label{propGeneralLHS}
        [\exp_{q^2}\Delta^{L-1}(E_1)][\exp_{q^2}\Delta^{L-1}(E_2)][\exp_{q^2}\Delta^{L-1}(E_3)]
        \left|\Omega_L\right\rangle.
    \end{equation}
    The q-exponential power series specifies all possible powers of its argument. So if we fix a particular $M_0, M_1, M_2$ as exponents of $E_1, E_2, E_3$, then the expression $E_1^{M_{0}}E_{2}^{M_{1}} E_{3}^{M_{2}}\left|\Omega_{L}\right\rangle$ can be found as a sub-series of equation (\ref{propGeneralLHS}). 
    
    \begin{lemma}
        We define the relevant actions of $E_i$ and $K_i$ on the basis (\ref{basisC6}), up to a factor of $-1$ since such factors will be cancelled out in $|\mathcal{G}(\eta)|$. 
        \begin{align*}
            &E_3 \text{ maps } v_5 \to v_3&&\\
            &E_2 \text{ maps } v_5 \to v_4  &&v_3 \to v_2\\
            &E_1 \text { maps } v_5 \to 0 &&v_2 \to v_1 \qquad v_3 \to 0 \quad v_4 \to 0\\
            &K_3 \text{ maps } v_5 \to q^{-1} v_5&&\\
            &K_2 \text{ maps } v_5 \to q^{-1}v_5  &&v_3 \to q^{-1}v_3\\
            &K_1 \text { maps } v_5 \to qv_5  &&v_2 \to q^{-1}v_2 \qquad v_3 \to v_3 \qquad v_4 \to v_4
        \end{align*}
  
    \end{lemma}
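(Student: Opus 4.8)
The plan is to prove the lemma by direct computation, specializing the general representation $\rho$ of $\mathcal{U}_q(\mathfrak{so}_{2m})$ from \eqref{UqSo2n Representation} to the case $m=3$, so that $\rho$ acts on $\mathbb{C}^6$, and reading off the matrix entries. Since $\rho$ is defined on the generators in terms of the elementary matrices $\mathscr{E}_{a,b}$, and since $\rho(K_i)=q^{\rho(H_i)}$ with each $\rho(H_i)$ diagonal, every action in the statement reduces to a single matrix–vector product. The only genuine preparation is to fix the dictionary between the abstract labels $v_1,\ldots,v_6$ of \eqref{basisC6} and the standard ordered basis $e_1,\ldots,e_6$ of $\mathbb{C}^6$: reading off \eqref{basisC6} gives $v_1=e_1$, $v_2=e_2$, $v_3=e_3$, $v_4=e_6$, $v_5=e_5$, $v_6=e_4$. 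I would record this dictionary explicitly before computing, since the transposition $v_4=e_6$, $v_6=e_4$ is the one place where an index slip is easy.

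First I would write out the three raising operators at $m=3$: from \eqref{UqSo2n Representation}, $\rho(E_1)=\mathscr{E}_{1,2}-\mathscr{E}_{5,4}$, $\rho(E_2)=\mathscr{E}_{2,3}-\mathscr{E}_{6,5}$, and, from the $i=m$ case, $\rho(E_3)=\mathscr{E}_{2,6}-\mathscr{E}_{3,5}$. Applying these to the relevant standard basis vectors yields $\rho(E_3)e_5=-e_3$; $\rho(E_2)e_5=-e_6$ and $\rho(E_2)e_3=e_2$; and $\rho(E_1)e_2=e_1$ with $\rho(E_1)e_5=\rho(E_1)e_3=\rho(E_1)e_6=0$. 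Translating through the dictionary, these are exactly the claimed maps $E_3:v_5\to v_3$; $E_2:v_5\to v_4$, $v_3\to v_2$; and $E_1:v_5\to 0$, $v_2\to v_1$, $v_3\to 0$, $v_4\to 0$, where the minus signs in $\rho(E_3)e_5$ and $\rho(E_2)e_5$ are absorbed by the ``up to a factor of $-1$'' clause.

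Next I would handle the $K_i$. At $m=3$ the Cartan generators are $\rho(H_1)=\mathscr{E}_{1,1}-\mathscr{E}_{2,2}-\mathscr{E}_{4,4}+\mathscr{E}_{5,5}$, $\rho(H_2)=\mathscr{E}_{2,2}-\mathscr{E}_{3,3}-\mathscr{E}_{5,5}+\mathscr{E}_{6,6}$, and $\rho(H_3)=\mathscr{E}_{2,2}+\mathscr{E}_{3,3}-\mathscr{E}_{5,5}-\mathscr{E}_{6,6}$, so each $\rho(K_i)=q^{\rho(H_i)}$ simply scales $e_a$ by $q$ raised to the $a$-th diagonal entry of $\rho(H_i)$. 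Reading off the relevant entries gives $\rho(K_3)e_5=q^{-1}e_5$; $\rho(K_2)e_5=q^{-1}e_5$ and $\rho(K_2)e_3=q^{-1}e_3$; and $\rho(K_1)e_5=qe_5$, $\rho(K_1)e_2=q^{-1}e_2$, $\rho(K_1)e_3=e_3$, $\rho(K_1)e_6=e_6$, which are precisely the scalings asserted for $K_3,K_2,K_1$ on $v_5,v_3,v_2,v_4$.

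Because every line of the lemma is a single entry lookup in \eqref{UqSo2n Representation}, there is no real obstacle beyond organization: the only things to get right are the basis dictionary (especially $v_4=e_6$, $v_6=e_4$) and the observation that all signs are irrelevant to the statement, since the actions are recorded only up to $\pm 1$ and these signs cancel in $|\mathcal{G}(\eta)|$. I would therefore present the proof as a short table of matrix–vector products organized by generator, and close by remarking that the omitted actions (the $E_i$ and $K_i$ on the remaining basis vectors) are computed identically and are not needed in the sequel.
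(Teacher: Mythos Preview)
Your proposal is correct and matches the paper's approach exactly: the paper simply states that the lemma ``follows from direct computation on the matrix representations of $E_i$ and $K_i$,'' and you have carried out precisely that computation, including the careful handling of the basis dictionary $v_4=e_6$, $v_6=e_4$ and the sign absorptions. There is nothing to add.
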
   
    This lemma follows from direct computation on the matrix representations of $E_i$ and $K_i$.
     To approach equation (\ref{propGeneralLHS}), we first analyze the action $E_3^{M_2}$ on the vacuum vector $\Omega_L = v_5^{\otimes L}$.
    \begin{align}
        \label{InitialE3}
        [\exp_{q^2}\Delta^{L-1}(E_3)]\left|\Omega_L\right\rangle &= \exp_{q^2}
        (\sum_{j=0}^{L-1} \underbrace{K_3 \otimes \cdots \otimes K_3}_{j \text{ times}} \otimes E_3 \otimes \underbrace{1 \otimes \cdots \otimes 1}_{L-j-1\text{ times}})
        \left|\Omega_L\right\rangle
        \\\label{Pseudofactorization}
        &= \left[\prod_{j=0}^{L-1} \exp_{q^2}(\underbrace{K_3 \otimes \cdots \otimes K_3}_{j \text{ times}} \otimes E_3 \otimes \underbrace{1 \otimes \cdots \otimes 1}_{L-j-1\text{ times}})
        \right]\left|\Omega_L\right\rangle
        \\\label{Simplifying exponential}
        &= \left[\prod_{j=0}^{L-1} (1 + \underbrace{K_3 \otimes \cdots \otimes K_3}_{j \text{ times}} \otimes E_3 \otimes \underbrace{1 \otimes \cdots \otimes 1}_{L-j-1\text{ times}})
        \right]\left|v_5^{\otimes L}\right\rangle
        \\\label{Combinatorial argument}
        &= \sum_{k=0}^L \quad \sum_{S \subseteq [L],|S| = k} \quad \prod_{i=1}^k \, q^{-(S_i-1)} |V_S\rangle,
    \end{align}
    where $[L]$ denotes $\{1, 2, \ldots L\}$ 
    and the condition $|S|=k$ denotes a particle configuration with $k$ many class 2 particles, and $\{S_1,\ldots,S_k\}$ are the locations of those class $2$ particles, for example,
    \begin{align*}
        |V_S\rangle = v_5 \otimes v_3 \otimes v_3 \otimes v_5 \otimes v_3 
        \quad \text{ yields } S = \{S_1, S_2, S_3\} = \{2, 3, 5\}. 
    \end{align*}
    Equation (\ref{InitialE3}) to (\ref{Pseudofactorization}) follows from the Psuedo-factorization
    property of the q-exponential, as shown in \cite{CGRS}. The simplification to 
    (\ref{Simplifying exponential}) follows from the fact that higher order powers of $E_3$ are the 0 matrix 
    when viewed from the fundamental representation defined in Section 2.1.
    \medskip
    
     Finally, equation (\ref{Combinatorial argument})
    follows from a combinatorial argument. The binomial expansion of the product in (\ref{Simplifying exponential}) will lead to $2^L$ terms, which correponds to the sums over all subsets in (\ref{Combinatorial argument}). Every action of $K_3$ on $v_5$ produces a singular factor of $q^{-1}$, and distributing the product of operators in (\ref{Simplifying exponential}) yields that for every site $x \in [L]$ in which $E_3$ maps $v_5 \to v_3$, there will be $x-1$ appearances of $K_3$ and thus a factor of $q^{-(x-1)}$. This is best illustrated in a later concrete example. 
    \bigskip

     We now continue by examining the actions of $E_2^{M_1}$ on the resulting expression. 
    Since $q$ is a constant and operators are linear, we can work inside the nested sums/products and
    examine just the action on an existing particle configuration $|V_S\rangle$. 
    \begin{align}
        [\exp_{q^2}\Delta^{L-1}(E_2)]\left|V_S\right\rangle &= \exp_{q^2}
        (\sum_{j=0}^{L-1} \underbrace{K_2 \otimes \cdots \otimes K_2}_{j \text{ times}} \otimes E_2 \otimes \underbrace{1 \otimes \cdots \otimes 1}_{L-j-1\text{ times}})
        \left|V_S\right\rangle
        \\
        &= \left[\prod_{j=0}^{L-1} \exp_{q^2}(\underbrace{K_2 \otimes \cdots \otimes K_2}_{j \text{ times}} \otimes E_2 \otimes \underbrace{1 \otimes \cdots \otimes 1}_{L-j-1\text{ times}})
        \right]\left|V_S\right\rangle
        \\
        &= \left[\prod_{j=0}^{L-1} (1 + \underbrace{K_2 \otimes \cdots \otimes K_2}_{j \text{ times}} \otimes E_2 \otimes \underbrace{1 \otimes \cdots \otimes 1}_{L-j-1\text{ times}})
        \right]\left|V_S\right\rangle
        \\
        &= \sum_{k=0}^L \quad \sum_{I \subseteq [L],|I| = k} \quad \prod_{i=1}^k \, q^{-(I_i-1)} |V_{I,S}\rangle.
    \end{align}
    Using the same arguments of $E_3$'s action on $\Omega_L$. Again, as before, $|V_{I,S}\rangle$ denotes a particular particle configuration with a fixed many class 1 and class 2 particles. 
    For example, 
    \begin{align*}
        |V_{I,S}\rangle = v_5 \otimes v_3 \otimes v_2 \otimes v_4 \otimes v_2
        \quad \text{ yields } S = \{S_1, S_2, S_3\} = \{2, 3, 5\} \text{ and } I = \{I_1, I_2, I_3\} = \{3, 4, 5\}. 
    \end{align*}Note however,  $K_2$ and $K_3$ only produce 
    a factor $q^{-1}$ in both these cases, as $K_3v_5 = q^{-1}v_5$ and $K_2v_5 = q^{-1}v_5$
    and $K_2v_3 = q^{-1}v_3$. 
    \bigskip

     The action of $E_1$ on $|V_{I,S}\rangle$ is different from the analysis of $E_2$ and $E_3$:
    not only are there vectors being created that do not have a probabilistic particle interpretation
    (i.e. $v_1$) but also $K_1$ acts on $v_5$ producing a factor of $q$, while $K_1$ acts on $v_2$
    by producing a factor of $q^{-1}$. So
    in order to properly analyze the factors of $q$ produced, more attention must be paid to when $K_1$ acts, and on which vectors it acts upon. 
    \begin{equation*}
    \begin{split}
        [\exp_{q^2}\Delta^{N-1}(E_1)]\left|V_{I,S}\right\rangle &= \left[\prod_{j=0}^{N-1} (1 + \underbrace{K_1 \otimes \cdots \otimes K_1}_{j \text{ times}} \otimes E_1 \otimes \underbrace{1 \otimes \cdots \otimes 1}_{N-j-1\text{ times}})
        \right]\left|V_{I,S}\right\rangle\\
        & =\sum_{k=0}^N \quad \sum_{T \subseteq [N],|T| = k} \quad \prod_{i=1}^k \, q^{-[(2T_i - 2)-\lambda_1(T_i) - \lambda_2(T_i) + \lambda_5(T_i)]}\,\,\, |V_{T,I,S}\rangle, \\
    \end{split}
    \end{equation*}
 where $\lambda_n(x)$  is the number of  $v_n$  vectors from (\ref{basisC6}) strictly left of position $x$. 
    
The factors of $q$ produced depend entirely on which vectors $v_n$ that are acted upon by $K_1$. Most notably, for every null site $T_i$, this means that a $v_1$ ``particle'' must have emerged through $E_1v_2$. 
\medskip 

 Combinatorially, this means that there are a corresponding $T_i-1$ actions of $K_1$ on each of the sites to the left of $T_i$. The $\lambda_1$ and $\lambda_2$ terms take into account the action $K_2v_2$'s that occur to the left of $T_i$, each of which is $q^{-1}$. And the $\lambda_5$ term takes into account the $K_2v_5$'s that occur to the left of $T_i$, contributing a factor of $q$. Finally, the $(2T_i-2) = 2(T_i - 1)$ is present to preserve the original factors of $q$ that were acquired through $I$ and $S$ but no longer are multiplied when a double-occupied site becomes a null site. 
\bigskip
    
 Putting all three actions of the q-exponentials together, we can rewrite \eqref{propGeneralLHS} as the following:
\begin{equation}
\label{FinalSumGeneralize}
    \eqref{propGeneralLHS} = \sum_{M_0,M_1,M_2}\,\,\, \sum_{\eta}\left[\prod_i^{|S|}q^{-(S_i -1)}\right]\left[\prod_j^{|I|}q^{-(I_j -1)}\right]\left[\psi\right] \,\, \left|V_{T, I,S}\right\rangle   
\end{equation} 
 where $ \psi = -((2T_i - 2)-\lambda_1(T_i) - \lambda_2(T_i) + \lambda_5(T_i))$.

The outer sum is over all $M_0, M_1, M_2$ properly bounded in $N$ such that $M_1, M_2 \leq N - M_0$, and the inner sum is over all $\eta$ with null site, class 1, and class 2 locations as sets $T, I, S$ respectively, such that $|T| = M_0$, $|I| = M_1$, and $|S| = M_2$. The notation follows conventions used directly prior. 
\bigskip 

 This decomposition (\ref{FinalSumGeneralize}) shows that $E_1^{M_{0}}E_{2}^{M_{1}} E_{3}^{M_{2}}\left|\Omega_{N}\right\rangle=\sum_{\eta} \mathcal{G}(\eta)|\eta\rangle$, a sum of particle configurations scaled by $\mathcal{G(\eta})$. Moreover, we can express this scaling $|\mathcal{G}(\eta)|$ in a factorizable form: 
\begin{align*}
    |\mathcal{G}(\eta)|= Z_{N, M_0, M_{1}, M_{2}}^{-1} \prod_{x_{0} \in (A_0 \cap A_{1} \cap A_2)^c(\eta)} q^{x_{0}} \prod_{x_{1} \in A_{1}(\eta)} q^{-x_{1}} \prod_{x_{2} \in A_{2}(\eta)} q^{-x_{2}}   \prod_{x_{3} \in A_{0}(\eta)} q^{-2x_{3}}
\end{align*}
up to a normalization constant $Z$, by looking at the factors of $q$ that are contributed by null sites, class 1 and class 2 particles, and empty sites. Each of the $v_5$ empty sites contributes positive factors of $q$, the class 1 and class 2 sites contribute negative exponents of $q$, and null sites emulate factors of $q^{-2}$, which come from the $-2T_i$ found in $\psi$. This completes the proof. 
\end{proof}

\bibliographystyle{alpha}
\bibliography{main}

\end{document}